\newtheorem{theorem}{Theorem}[section]
\newtheorem{proposition}[theorem]{Proposition}
\newtheorem{remark}[theorem]{Remark}
\newtheorem{corollary}[theorem]{Corollary}
\numberwithin{equation}{section}
\title{A strong FKG inequality for multiple events}
\author{Nikita Gladkov\thanks{Department of Mathematics,
UCLA, Los Angeles, CA, 90095.~ Email: ~ \texttt{{gladkovna}@ucla.edu}}}
\date{}
\begin{document}

\maketitle

\begin{abstract}
We give an extension of the FKG inequality to the case of multiple events with equal pairwise intersections. We then apply this inequality to resolve Kahn's question on positive associated (PA) measures.
\end{abstract}

\section{Introduction}

The \textit{Fortuin--Kasteleyn--Ginibre} (FKG) \textit{inequality} is an inequality with numerous applications ranging from percolation to graph theory, from poset theory to probability theory \cite[Ch.~6]{AS}, \cite{G}. It generalizes the Harris--Kleitman inequality to a large class of measures on a hypercube. 

Denote by $H_n$ the $n$-dimensional discrete hypercube. We think of it as a distributive lattice, where $\vee$ is a coordinatewise maximum and $\wedge$ is a coordinatewise minimum. Let $\mu$ be a probability measure on $H_n$. Assume $\mu$ satisfies the following \textit{FKG property}:
$$\mu(a\vee b)\mu(a \wedge b) \ge \mu(a)\mu(b) \text{ for all }a, b \in H_n.$$ 
The FKG inequality guarantees nonnegative correlations of events that are closed upwards:

\begin{equation}\label{classicFKG}
P(E_1 \cap E_2) \ge P(E_1)P(E_2) 
\end{equation}

The Harris--Kleitman inequality is the partial case of the FKG inequality for product measures on $H_n$. Measures for which all closed-upwards events correlate nonnegatively are said to have \textit{positive associations} (PA). In other words, the FKG inequality says that all measures with the FKG property are PA. The FKG inequality is used to show that measures arising from random cluster model are PA~\cite{G06}. 
 
We prove a strong version of the Harris--Kleitman inequality for events with equal pairwise intersections (Theorem~\ref{Harris+}). We then use the approach from \cite{kahn2022note} to generalize it to measures with the FKG property (Theorem~\ref{FKG+}). There are generalizations of the Harris--Kleitman and FKG inequalities as well as the more general AD inequality (four functions theorem) \cite{AD} to multiple sets \cite{AK, RS}. Richards \cite{Ri} claimed to prove another generalization of FKG to multiple sets, but the proof has essential gaps \cite{Sa}, so this generalization is proved only with additional restrictive conditions \cite{LS}. As far as we know, our generalization is different from all the others.

We use our inequality to prove a conjecture of Kahn (Theorem~\ref{ans}). Roughly speaking, we establish that the FKG inequality can be extended beyond the random variables which are monotonically determined by underlying independent variables.

\section{Strong Harris--Kleitman inequality}

We say that measure $\mu$ on $H_n$ is a \textit{product measure} if there exist probability measures $\mu_1$, $\mu_2$, \dots, $\mu_n$ on $\{0, 1\}$, such that $\mu$ coincides with the direct product $\mu_1 \times \mu_2 \times \dots \times \mu_n$. Recall the notation $e_2(x_1, \dots, x_k) := \sum_{1 \le i < j \le k} x_ix_j$ for the second symmetric polynomial. We say that a subset of $H_n$ is \textit{closed upwards} if with each vector $v \in H_n$ it also contains all vectors bigger than $v$ in the natural partial order.
Here is our main result:

\begin{theorem}\label{Harris+}
Let $\mu$ be a probability product measure on $H_n$, and 
$$H_n = A \sqcup C_1 \sqcup C_2 \sqcup \dots \sqcup C_k \sqcup B \text{ for }k \ge 2$$
such that all sets of the form $A \cup C_i$ are closed upwards. Then:

\begin{equation}\mu(A)\mu(B) \ge e_2\big(\mu(C_1), \dots, \mu(C_k)\big).\label{E2}\end{equation}

\end{theorem}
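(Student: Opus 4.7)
The plan is to proceed by induction on the dimension $n$. The base case $n = 1$ is a short structural check: if two of the $C_i$'s were both non-empty in $H_1 = \{0,1\}$, they would form the pair $\{\{0\},\{1\}\}$, and upward-closedness of $A \cup \{0\}$ would then force $1 \in A$, contradicting $\{1\}$ being disjoint from $A$. So at most one $C_i$ is non-empty, $e_2(c) = 0$, and the inequality is trivial.

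For the inductive step, split $H_n = H_{n-1}\times\{0\} \sqcup H_{n-1}\times\{1\}$ along the last coordinate and factor $\mu = \mu' \otimes \mu_n$ with $p := \mu_n(\{1\})$. For $\epsilon \in \{0,1\}$ and each part $X$, let $X^\epsilon := \{v \in H_{n-1} : (v,\epsilon) \in X\}$ and $x_\epsilon := \mu'(X^\epsilon)$. The upward-closedness of each $A \cup C_i$ in $H_n$ has two consequences: (i) each slice $H_{n-1} = A^\epsilon \sqcup C_1^\epsilon \sqcup \cdots \sqcup C_k^\epsilon \sqcup B^\epsilon$ is itself a valid partition of the required form (since $A^\epsilon \cup C_i^\epsilon$ is upward-closed in $H_{n-1}$), so the inductive hypothesis yields $a_\epsilon b_\epsilon \ge e_2(c^\epsilon)$ for both $\epsilon$; and (ii) the two slices are coupled by the inclusions $A^0 \subseteq A^1$, $B^1 \subseteq B^0$, and $C_i^0 \subseteq A^1 \cup C_i^1$ for every $i$.

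Expanding the target inequality as a quadratic in $p$ produces
\[
\mu(A)\mu(B) - e_2\bigl(\mu(C_1),\dots,\mu(C_k)\bigr) = (1-p)^2 X_0 + p^2 X_1 + p(1-p) Y,
\]
where $X_\epsilon := a_\epsilon b_\epsilon - e_2(c^\epsilon) \ge 0$ by induction, and
\[
Y := a_0 b_1 + a_1 b_0 + \sum_i c_i^0 c_i^1 - \Bigl(\sum_i c_i^0\Bigr)\Bigl(\sum_i c_i^1\Bigr).
\]
The whole quadratic is nonneg on $p \in [0,1]$ as soon as $Y \ge -2\sqrt{X_0 X_1}$. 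The bulk of the work is then to analyse $Y$ using the coupling (ii): every point of $H_{n-1}$ belongs to exactly one of the six admissible "transition types" between slices, namely $(A,A)$, $(C_i,C_i)$, $(C_i,A)$, $(B,C_i)$, $(B,A)$, or $(B,B)$. Introducing atomic masses $p_6, p_4^i, p_5^i, p_2^i, p_3, p_1$ for these types, respectively, rewrites the entire inequality as a polynomial in those atoms. The negative monomials of $Y$ are exactly products of the form $p_4^i p_4^j$, $p_4^i p_2^j$, $p_4^i p_5^j$ with $i \ne j$, and the key observation is that these are precisely the products the inductive-hypothesis inequalities $X_0 \ge 0$ and $X_1 \ge 0$ already pin down through their own atomic expansions.

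The main obstacle is arranging this bookkeeping so that the needed cancellation is manifest. If a direct display of the full quadratic as a sum of nonneg monomials is not forthcoming, the robust backup is to verify the sharp condition $Y^2 \le 4 X_0 X_1$ instead: one can strengthen the lower bound on each $X_\epsilon$ by applying the inductive hypothesis also to \emph{coarsened} slice partitions, obtained by merging several $C_i^\epsilon$'s into a single super-class (which evidently preserves the upward-closedness hypothesis), giving enough margin in $X_0 X_1$ to absorb any negative $Y$.
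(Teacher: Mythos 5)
Your induction setup, slicing along the last coordinate, application of the inductive hypothesis to the two slice partitions, and the decomposition $\mu(A)\mu(B)-e_2(\mu(C))=(1-p)^2X_0+p^2X_1+p(1-p)Y$ all coincide exactly with the paper's argument (your six transition types are precisely the paper's atoms $A_0$, $C_i^\circ$, $C_i^+$, $C_i^-$, $D$, $B_0$, and your two slice inequalities are the paper's two uses of the inductive hypothesis). The genuine gap is that you never actually prove the one inequality that closes the argument. You correctly identify that it suffices to control the cross term $Y$, and you correctly list its negative monomials, but the claim that these are ``precisely the products the inductive-hypothesis inequalities $X_0\ge 0$ and $X_1\ge 0$ already pin down'' is not an argument: $X_0\ge 0$ is a statement about a whole difference, not about individual monomials, and the mismatch of weights $(1-p)^2$, $p^2$ versus $p(1-p)$ means you cannot absorb monomials of $Y$ into $X_0,X_1$ termwise. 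Your backup plan ($Y^2\le 4X_0X_1$ via coarsened partitions) is also only a sketch, and the coarsening you describe (merging several $C_i$'s) would need at least two classes to survive to yield a nontrivial strengthening, so it is not clear it supplies the needed margin.

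The fix is much shorter than either of your proposed routes, and it is what the paper does: show the quadratic is concave in $p$, i.e.\ $Y\ge X_0+X_1$, which with $X_0,X_1\ge 0$ even gives $Y\ge 0$, so all three coefficients in your decomposition are nonnegative. Writing $d$, $c_i^+$, $c_i^-$ for the masses of your types $(B,A)$, $(C_i,A)$, $(B,C_i)$, one has $a_1-a_0=d+\sum_i c_i^+$, $b_0-b_1=d+\sum_i c_i^-$, $c_i^0-c_i^1=c_i^+-c_i^-$, and the polarization identity for your quadratic gives
\begin{align*}
Y-(X_0+X_1)&=(a_1-a_0)(b_0-b_1)+e_2\bigl(c_1^0-c_1^1,\dots,c_k^0-c_k^1\bigr)\\
&=d^2+d\sum_i\bigl(c_i^++c_i^-\bigr)+\sum_i c_i^+c_i^-+e_2\bigl(c^+\bigr)+e_2\bigl(c^-\bigr)\;\ge\;0,
\end{align*}
since the mixed products $\sum_{i\ne j}c_i^+c_j^-$ cancel between the two summands. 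Inserting this three-line computation where you currently have ``the main obstacle'' completes your proof and makes it identical to the paper's.
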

\begin{remark}
\rm{For $k=2$ we obtain the Harris--Kleitman inequality \eqref{classicFKG}. Indeed, for probability measure $\mu$ inequality
$$\mu(A)\mu(B) \ge \mu(C_1)\mu(C_2)$$
 is equivalent to the positive correlation of closed-upwards events $E_1 = A \cup C_1$ and $E_2 = A \cup C_2$:

$$\mu(A)\big(\mu(A)+\mu(C_1)+\mu(C_2)+\mu(B)\big) \ge \big(\mu(A)+\mu(C_1)\big)\big(\mu(A)+\mu(C_2)\big).$$}
\end{remark}

\begin{proof}[Proof of Theorem \ref{Harris+}]
Fix $k$. We proceed by induction on $n$. The case $n=0$ is trivial. Let $\mathbf{v} \in H_{n-1}$, we can identify it with the corresponding vector in $H_n$, which has $n$-th coordinate equal to $0$. Denote by $\mathbf{v}\!\uparrow$ the corresponding vector in $H_n$, which has $n$-th coordinate equal to $1$. Define
\begin{align*}A_0 &:= \{\mathbf{v} \in H_{n-1} : \mathbf{v} \in A,\mathbf{v}\!\uparrow~  \in A\},\\
B_0 &:= \{\mathbf{v} \in H_{n-1} : \mathbf{v} \in B, \mathbf{v}\!\uparrow~ \in B\},\\
C_i^+ &:= \{\mathbf{v} \in H_{n-1} : \mathbf{v} \in C_i, \mathbf{v}\!\uparrow~ \in A\},\\
C_i^\circ &:= \{\mathbf{v} \in H_{n-1} : \mathbf{v} \in C_i, \mathbf{v}\!\uparrow~ \in C_i\},\\
C_i^- &:= \{\mathbf{v} \in H_{n-1} : \mathbf{v} \in B, \mathbf{v}\!\uparrow~ \in C_i\},\\
D &:= \{\mathbf{v} \in H_{n-1} : \mathbf{v} \in B, \mathbf{v}\!\uparrow~ \in A\}.\end{align*}

Using the assumptions in the theorem, we have:
$$H_{n-1} = A_0 \sqcup B_0 \sqcup_{i=1}^k C_i^+ \sqcup_{i=1}^k C_i^\circ \sqcup_{i=1}^k C_i^- \sqcup D \text{ (see Fig. \ref{drawing}).}$$
\begin{center}
\begin{figure}[h]
\centering\includegraphics[width=0.4\linewidth]{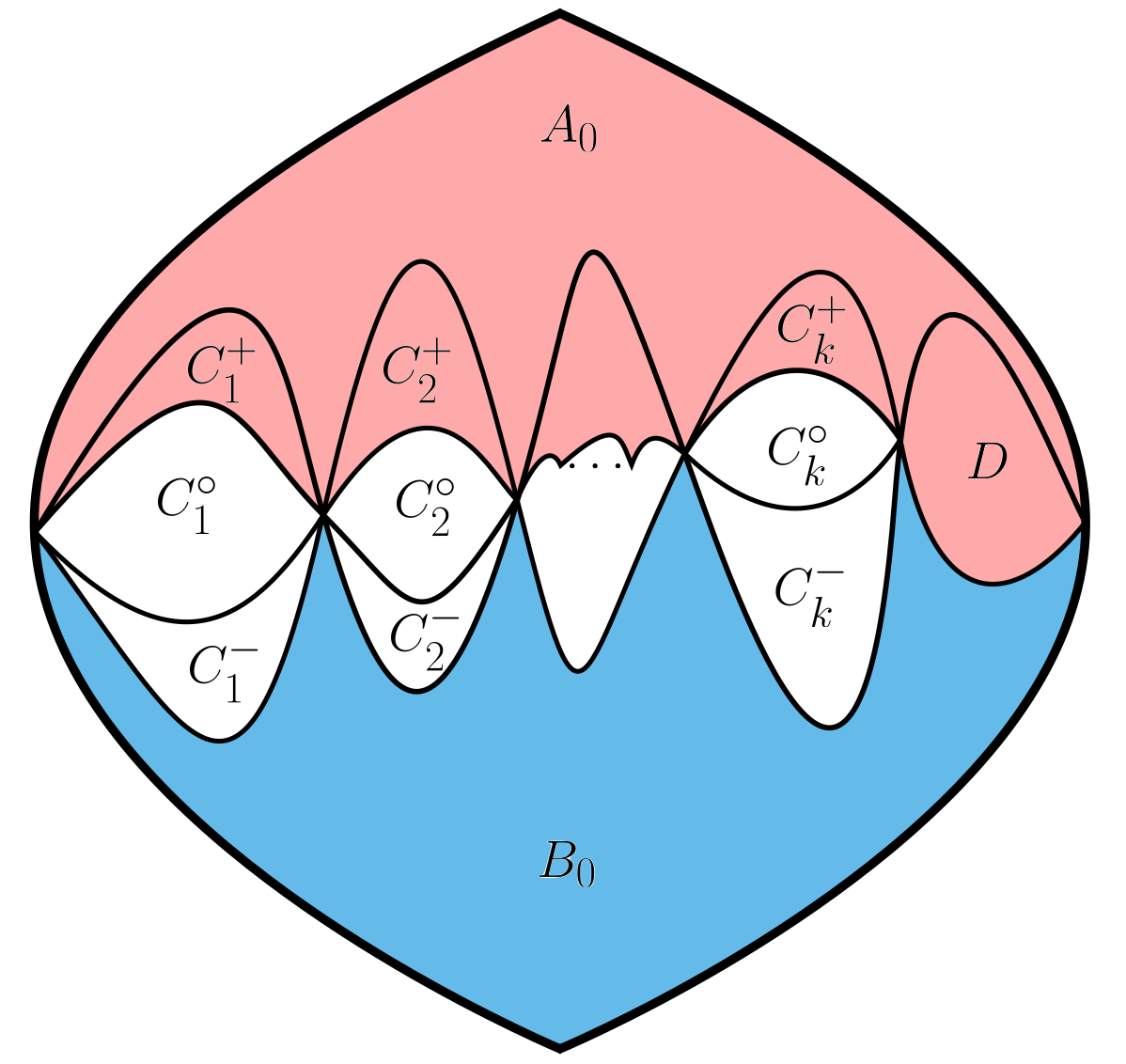}\includegraphics[width=0.4\linewidth]{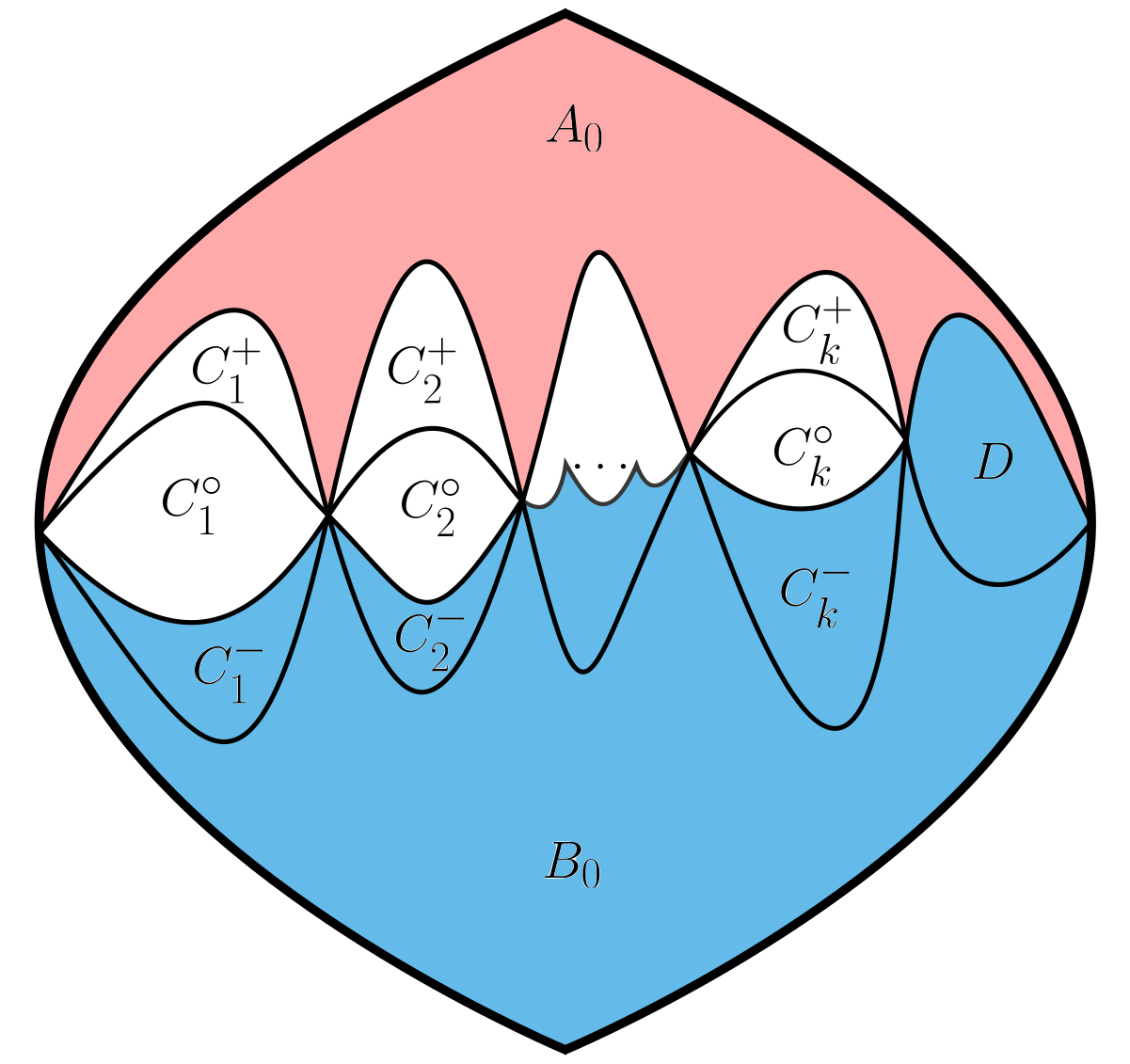}
\caption{Subdivision of $\{0,1\}^{n-1}$}\label{drawing}
\end{figure}
\end{center}

Note that the projection of product measure $\mu$ to $H_{n-1}$ along the $n$-th coordinate is also a product measure. Denote it by $\mu'$. Also, denote 

$$a_0 := \mu'(A_0),\hskip0.8em b_0:=\mu'(b_0),\hskip0.8em c_i^+ := \mu'(C_i^+),\hskip0.8em c_i^\circ := \mu'(C_i^\circ),\hskip0.8em c_i^- := \mu'(C_i^-),\hskip0.8em d := \mu'(D).$$

By the induction hypothesis we have (see first subdivision in Fig. \ref{drawing}):

$$\left(a_0+d+\sum_{i=1}^k c_i^+\right) b_0 ~\ge~ e_2(c_1^\circ+c_1^-, \dots, c_k^\circ+ c_k^-)$$

and (see second subdivision in Fig. \ref{drawing})

$$a_0\left(b_0+d+\sum_{i=1}^k c_i^-\right) ~\ge~ e_2(c_1^\circ+c_1^+, \dots, c_k^\circ+ c_k^+).$$
Let $p:=\mu(H_{n-1})$. We need to show that 
\vskip-0.4cm

\begin{align*}\left(a_0+p\left(d+\sum_{i=1}^k c_i^+\right)\right)&\left(b_0+\left(1-p\right)\left(d+\sum_{i=1}^k c_i^-\right)\right) \\ \ge~ e_2&\left(c_1^\circ+pc_1^-+\left(1-p\right)c_1^+,~ \dots,~ c_k^\circ+ pc_k^-+\left(1-p\right)c_k^+\right).\end{align*}

Note that this inequality is quadratic in $p$ and holds for $p=0$ and $p=1$. Thus it suffices to prove that the coefficient in $p^2$ in the LHS is less than that of the RHS:

$$-\left(d+\sum_{i=1}^k c_i^+\right)\left(d+\sum_{i=1}^k c_i^-\right) \le e_2\left(c_1^--c_1^+, \dots, c_k^--c_k^+\right).$$
And after canceling the terms this can be rewritten as
\vskip-0.4cm

$$-d\left(2d+\sum_{i=1}^k c_i^+ +\sum_{i=1}^k c_i^-\right) - \sum_{i=1}^k c_i^-c_i^+ ~\le~ e_2\left(c_1^-, \dots, c_k^-\right)+e_2\left(c_1^+, \dots, c_k^+\right).$$
This follows since the LHS is nonpositive and the RHS is nonnegative. This completes the induction step, and implies inequality \eqref{E2} for all $n$.
\end{proof}


\section{UI and FUI measures}

Recall the definition of UI and FUI measures introduced in \cite{kahn2022note}. Suppose $X_1, \dots, X_n$ are (dependent) Bernoulli random variables and $\mu$ is their joint distribution. Measure $\mu$ on $H_n$ is called $FUI$ (which stands for \textit{finitely many underlying independents}), if there is a realization of $X_i$'s as increasing functions of independent Bernoulli random variables $Y_1, \dots, Y_m$ for some $m$. Measure $\mu$ is called $UI$, if it is a limit of $FUI$ measures on the same hypercube.

Notice that the FUI and UI are weaker than the FKG property:

\begin{proposition}\textup{\cite[Footnote~1]{kahn2022note}}\label{FKGtoFUI}
All measures $\mu$ with the FKG property are FUI (and therefore UI).
\end{proposition}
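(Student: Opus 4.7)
The plan is to construct $X_1, \dots, X_n$ sequentially, introducing a fresh batch of independent Bernoullis at each step and writing the new coordinate as an increasing Boolean expression in the previously constructed coordinates together with the new Bernoullis. Since an increasing function of increasing functions is increasing, if each inductive step is carried out correctly then all $X_i$'s will automatically be monotone functions of the union of all Bernoullis introduced, giving an explicit FUI realization.

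The only input needed from FKG is monotonicity of the sequential conditional probabilities
\[
p_i(x_1, \dots, x_{i-1}) \ := \ \mu\bigl(X_i = 1 \mid X_1 = x_1, \dots, X_{i-1} = x_{i-1}\bigr).
\]
To verify this, fix $j < i$ and any values of the other coordinates $x_k$ with $k < i$, $k \ne j$. Conditioning $\mu$ on $\{X_k = x_k\}$ for each such $k$ preserves the FKG property at every step, giving an FKG measure $\tilde\mu$ on the remaining coordinates. Since $\{X_j = 1\}$ and $\{X_i = 1\}$ are up-sets in $\tilde\mu$, the FKG inequality yields $\tilde\mu(X_j = 1, X_i = 1) \ge \tilde\mu(X_j = 1)\,\tilde\mu(X_i = 1)$, which rearranges to $\tilde\mu(X_i = 1 \mid X_j = 1) \ge \tilde\mu(X_i = 1 \mid X_j = 0)$. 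Hence $p_i$ is nondecreasing in each of $x_1, \dots, x_{i-1}$.

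Given this monotonicity, the key construction at step $i$ runs as follows. Let $0 \le p^*_1 < \dots < p^*_{K_i} \le 1$ be the distinct values attained by $p_i$, and set $U_k := \{x \in H_{i-1} : p_i(x) \ge p^*_k\}$; each $U_k$ is an up-set. Introduce fresh independent Bernoullis $B_1, \dots, B_{K_i}$ (independent of all Bernoullis from earlier steps) with parameters $q_k := (p^*_k - p^*_{k-1})/(1 - p^*_{k-1})$ (with $p^*_0 := 0$), and define
\[
X_i \ := \ \bigvee_{k \,:\, (X_1, \dots, X_{i-1}) \,\in\, U_k} B_k.
\]
A telescoping product gives $\mu(X_i = 1 \mid X_{<i} = x) = 1 - \prod_{j=1}^{k}(1 - q_j) = p^*_k$ whenever $p_i(x) = p^*_k$, so the conditional law is correct. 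Monotonicity in the previous $X_j$'s is automatic because the up-set condition $(X_1, \dots, X_{i-1}) \in U_k$ only becomes easier to satisfy as $X_{<i}$ grows, and monotonicity in each new $B_k$ holds because a disjunction is monotone in every input.

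The only nontrivial step is the FKG-derived monotonicity of $p_i$; the rest is a straightforward combinatorial check, using at most $\sum_{i=1}^n K_i \le 2^n - 1$ underlying Bernoullis in total, which is finite. Hence $\mu$ is FUI.
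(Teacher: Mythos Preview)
Your proof is correct and follows essentially the same approach as the paper: both construct the $X_i$ sequentially from the conditional probabilities $p_i(x_1,\dots,x_{i-1})$, use the FKG property to establish that these conditionals are monotone, and then realize each step via finitely many independent Bernoullis. Your version is in fact more explicit than the paper's---you spell out both the monotonicity argument (via ``conditioning on a coordinate preserves FKG'') and the telescoping Bernoulli construction, whereas the paper routes through auxiliary $U(0,1)$ variables and then asserts that the resulting threshold events ``can be realized'' by Bernoullis without giving details.
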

\begin{proof}

Let $Z_1, \dots, Z_n$ be i.i.d.\! $U(0, 1)$ random variables. For all $i \in \{1, \dots, n\}$ we recursively define $X_i$ as functions of $Z$'s as follows. Assume that at the $i$-th step, we have 
$$X_j := f_j(Z_1, \dots, Z_{j}) \text{ for all } j<i.$$
Define
\begin{equation}X_i := \begin{cases}0, \text{ if }Z_i < \mu(\mathbf{v}_i = 0 ~|~ \mathbf{v}_j=X_j \text{ for all } 1 \le j < i);\\
1, \text{ otherwise.}
\end{cases}\label{xinz}
\end{equation}
It is easy to see that $\mu$ is the law of $(X_1, \dots, X_n)$. Moreover, the FKG property implies that $X_i$'s are non-decreasing in the $Z_j$'s. 

To prove that $\mu$ is FUI we need to represent $X_i$'s as functions of independent Bernoulli variables. Notice that \eqref{xinz} depends monotonically on a finite (though, exponential in $n$) number of events of form 
$$A(i, \mathbf{v}_1, \dots, \mathbf{v}_{i-1}) := \big\{Z_i < \mu(\mathbf{v}_i = 0 ~|~ \mathbf{v}_j=X_j \text{ for all } 1 \le j < i)\big\}.$$

It is possible to realize indicators of $A(\cdot)$'s as non-decreasing functions of independent, but possibly differently distributed, Bernoulli variables $Y(i, \mathbf{v}_1, \dots, \mathbf{v}_{i-1})$.

%
%
%
%
%
%

\vskip-0.4cm

\end{proof}

Proposition~\ref{FKGtoFUI} allows us to generalize Theorem~\ref{Harris+} to all UI-measures. In particular, it holds for all measures with the FKG property.

\begin{theorem}\label{FKG+}
Let $\mu$ be a UI measure on $H_n$, and 
$$H_n = A \sqcup C_1 \sqcup C_2 \sqcup \dots \sqcup C_k \sqcup B \text{ for }k\ge 2$$ such that all sets of the form $A \cup C_i$ are closed upwards. Then

\begin{equation}\mu(A)\mu(B) \ge e_2\big(\mu(C_1), \dots, \mu(C_k)\big).\label{alsoE2}\end{equation}
\end{theorem}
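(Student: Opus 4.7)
The plan is a standard two-step reduction: first pass from UI to FUI by continuity, then pull the FUI case back to a product measure on a larger hypercube and invoke Theorem~\ref{Harris+}.

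First, I would observe that inequality \eqref{alsoE2} is continuous in $\mu$: for a fixed partition of $H_n$ indexed by $A, C_1, \dots, C_k, B$, both sides are polynomials in the measures $\mu(\{v\})$ for $v \in H_n$. Hence, if the inequality holds for every FUI measure, it holds for every pointwise limit of FUI measures, i.e., for every UI measure. So it suffices to prove the theorem for FUI $\mu$.

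Next, assume $\mu$ is FUI, realized as the joint law of $X_i = f_i(Y_1, \dots, Y_m)$ where each $f_i : H_m \to \{0,1\}$ is non-decreasing and $Y_1, \dots, Y_m$ are independent Bernoullis. Let $\nu$ denote the product measure on $H_m$ that is the joint law of $(Y_1, \dots, Y_m)$, and let $F := (f_1, \dots, f_n) : H_m \to H_n$. Then $F$ is coordinatewise non-decreasing, and $\mu$ is the pushforward $F_* \nu$. Define the pullback partition
\[
\tilde A := F^{-1}(A), \quad \tilde C_i := F^{-1}(C_i), \quad \tilde B := F^{-1}(B).
\]
Since $A, C_1, \dots, C_k, B$ partition $H_n$, the sets $\tilde A, \tilde C_1, \dots, \tilde C_k, \tilde B$ partition $H_m$. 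Because $F$ is monotone and each $A \cup C_i$ is closed upwards in $H_n$, the preimage $\tilde A \cup \tilde C_i = F^{-1}(A \cup C_i)$ is closed upwards in $H_m$.

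Now apply Theorem~\ref{Harris+} to the product measure $\nu$ on $H_m$ with the partition $\tilde A, \tilde C_1, \dots, \tilde C_k, \tilde B$ to obtain
\[
\nu(\tilde A)\, \nu(\tilde B) \;\ge\; e_2\!\big(\nu(\tilde C_1), \dots, \nu(\tilde C_k)\big).
\]
Since $\nu(\tilde A) = \mu(A)$, $\nu(\tilde B) = \mu(B)$, and $\nu(\tilde C_i) = \mu(C_i)$, this is exactly \eqref{alsoE2}.

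There is essentially no obstacle here beyond bookkeeping: the only things that need verifying are that FUI measures are dense in UI measures (built into the definition), that the pullback of a closed-upward set under a monotone map is closed upward (immediate), and that both sides of \eqref{alsoE2} are continuous in $\mu$ (they are polynomials). The mildly delicate point is simply to be careful that the monotone realization provided by an FUI representation produces a genuine coordinatewise monotone $F : H_m \to H_n$, so that Theorem~\ref{Harris+} applies to the product measure $\nu$ on the larger cube $H_m$.
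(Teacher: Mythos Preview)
Your proof is correct and follows essentially the same approach as the paper: reduce FUI to the product-measure case on the larger cube $H_m$ via the monotone map $F$ and apply Theorem~\ref{Harris+}, then pass to UI by taking limits. Your write-up is simply more explicit about the pullback and the continuity step than the paper's version.
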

\begin{proof}
Suppose $\mu$ is an FUI measure on $(X_1, \dots, X_n)$. Then we can assume $X_i$'s are binary non-decreasing functions of independent $m$ Bernoulli variables $Y_1, \dots, Y_m$ as in the proof of Proposition \ref{FKGtoFUI}. All sets $A \cup C_j$ are closed upwards in the hypercube generated by $Y_i$'s, so by Theorem~\ref{Harris+} we have inequality \eqref{alsoE2}. For UI measures, inequality \eqref{alsoE2} is obtained as a limit of inequalities for FUI measures.
\end{proof}
\begin{remark}\rm{
Following the original proof in \cite{FKG71}, one may extend Theorem~\ref{FKG+} to general distributive lattices. Note that the proof in \cite{AB, AD} does not extend here. It would be interesting to obtain a functional analog of the equation \eqref{alsoE2}, similar to how the AD inequality serves as a functional analog of the FKG inequality.}
\end{remark}

The following is the main result of Kahn \cite{kahn2022note} and a basis for our main application:

\begin{theorem}\rm{\cite[Corollary 4]{kahn2022note}}\label{KahnFUI}
There are measures on $H_n$ with positive associations which are not FUI.
\end{theorem}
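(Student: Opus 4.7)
The natural plan is to use Theorem~\ref{FKG+} itself as the obstruction. Since every UI measure (and in particular every FUI measure) must satisfy inequality \eqref{alsoE2}, it is enough to exhibit a PA measure on some $H_n$ together with an admissible partition for which \eqref{alsoE2} fails. For $k=2$ the inequality \eqref{alsoE2} is equivalent to ordinary positive correlation of the up-sets $A\cup C_1$ and $A\cup C_2$, which is already implied by PA, so the search must be carried out with $k\ge 3$. The concrete target is therefore a probability measure $\mu$ and a partition
\[
H_n = A \sqcup C_1 \sqcup C_2 \sqcup C_3 \sqcup B
\]
with each $A\cup C_i$ closed upward, such that every pair of closed-upward events is positively correlated under $\mu$, yet
\[
\mu(A)\mu(B) < e_2\big(\mu(C_1),\mu(C_2),\mu(C_3)\big).
\]

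To locate such a measure, I would fix a small dimension $n$ (say $n\in\{3,4\}$) and a concrete combinatorial shape of the partition, then write down the PA inequalities (one per pair of up-sets) together with the reverse of \eqref{alsoE2} as a semi-algebraic system on the probability simplex on $H_n$. Feasibility can be checked numerically, and any numerical solution can be rationalized. To obtain a presentable example rather than a tabulated one, I would simultaneously look for symmetric candidates, for instance a measure invariant under a suitable coordinate permutation, or a small perturbation $\mu = (1-\varepsilon)\nu + \varepsilon\rho$ of a known PA reference $\nu$ (a product measure, or a random-cluster-type measure) by a correction $\rho$ designed to inflate $\mu(C_1)+\mu(C_2)+\mu(C_3)$ while depressing $\mu(A)\mu(B)$, chosen small enough that PA survives.

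The main obstacle I anticipate is the verification of PA. The number of upward-closed subsets of $H_n$ is a Dedekind number and grows double-exponentially in $n$, so naive pair-by-pair checking is practical only for very small $n$, while the example still has to accommodate three disjoint \emph{middle} pieces $C_1,C_2,C_3$ whose unions with $A$ are all up-sets. I therefore expect the real work to lie in presenting $\mu$ in a form structured enough, through a coupling with PA measures already understood or through an orbit decomposition under a symmetry group of $H_n$, that PA can be certified on a handful of representative pairs rather than on the full list of up-sets. Once such a $\mu$ is in hand, Theorem~\ref{FKG+} immediately yields that $\mu$ is not UI, and a fortiori not FUI, while $\mu$ remains PA by construction.
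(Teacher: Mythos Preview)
Your strategy is exactly right and is, in fact, the method the paper employs---not for Theorem~\ref{KahnFUI} itself, which the paper merely quotes from Kahn without proof, but for the stronger Theorem~\ref{ans} (``not UI'' rather than ``not FUI''). What your proposal lacks is only the concrete witness, and the paper supplies it: take $n=3$ and let $\mu_3$ be the law of the fixed-point set of a uniformly random permutation of $\{1,2,3\}$. Then $\mu_3(\{1,2,3\})=\mu_3(\{i\})=\tfrac16$ for each $i$, $\mu_3(\varnothing)=\tfrac13$, and every two-element set has mass zero. With $A=\{|S|\ge 2\}$, $B=\{S=\varnothing\}$, $C_i=\{S=\{i\}\}$ one gets
\[
\mu_3(A)\,\mu_3(B)=\tfrac{1}{18}<\tfrac{1}{12}=e_2\bigl(\tfrac16,\tfrac16,\tfrac16\bigr),
\]
so \eqref{alsoE2} fails and $\mu_3$ is not UI, hence not FUI.

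The obstacle you anticipated---certifying PA---evaporates for this example: positive association of $\mu_n$ is a theorem of Fishburn, Doyle and Shepp \cite{FDS}, so no enumeration of Dedekind-many up-sets is needed. Your search and perturbation machinery is thus unnecessary; once the example is named, the argument is the two-line calculation above. Note also that your route, via Theorem~\ref{FKG+}, actually proves more than Theorem~\ref{KahnFUI}: it already gives Theorem~\ref{ans}.
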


We identify subsets of $\{1, 2, \dots, n\}$ with points in $H_n$. Consider the law $\mu_n$ of the set of fixed points of a uniform permutation $\sigma \in S_n$. It was shown in \cite{FDS} that $\mu_n$ has positive associations. Kahn uses the measure $\mu_3$ to prove Theorem~\ref{KahnFUI}. He writes: ``it seems surprisingly hard to say anything about the law of a UI $\mu$ that uses more than positive association''.

It turns out that Theorem~\ref{FKG+} helps us to extend Kahn's theorem to UI measures. In fact, we use the same measure $\mu_3$. This answers a question dating back to at least 2002 \cite[Question 1]{kahn2022note}. 

\begin{theorem}\label{ans}
There are measures on $H_n$ with positive associations which are not UI.
\end{theorem}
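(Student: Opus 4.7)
The plan is to use Theorem~\ref{FKG+} as the obstruction: every UI measure must satisfy the $e_2$-inequality \eqref{alsoE2} for every admissible partition, so to answer Kahn's question it suffices to exhibit a PA measure on some $H_n$ together with a partition that violates \eqref{alsoE2}. Following Kahn \cite{kahn2022note}, I would take the same measure $\mu_3$ on $H_3$ (the law of the fixed-point set of a uniform $\sigma \in S_3$), which has positive associations by \cite{FDS}.

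First I would write out $\mu_3$ explicitly on the eight points of $H_3$: identifying subsets of $\{1,2,3\}$ with vertices of $H_3$, the mass is $\mu_3(\varnothing) = 1/3$ (the two $3$-cycles), $\mu_3(\{i\}) = 1/6$ for each $i$ (the unique transposition fixing $i$), $\mu_3(\{i,j\}) = 0$ for each two-element set (a permutation of $\{1,2,3\}$ fixing two points must be the identity), and $\mu_3(\{1,2,3\}) = 1/6$.

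Next I would choose the partition with $k = 3$ given by
\[
B := \{\varnothing\}, \qquad C_i := \{\{i\}\}\ (i = 1,2,3), \qquad A := \bigl\{\{1,2\},\{1,3\},\{2,3\},\{1,2,3\}\bigr\}.
\]
Each $A \cup C_i$ is upward closed: every superset of $\{i\}$ lies in $A$, and the only vertex above the remaining $2$-element set in $A \cup C_i$ is $\{1,2,3\}$, also in $A$. So the hypotheses of Theorem~\ref{FKG+} are met. Plugging in,
\[
\mu_3(A)\,\mu_3(B) \;=\; \tfrac{1}{6}\cdot\tfrac{1}{3} \;=\; \tfrac{1}{18}, \qquad e_2\!\bigl(\tfrac{1}{6},\tfrac{1}{6},\tfrac{1}{6}\bigr) \;=\; 3\cdot\tfrac{1}{36} \;=\; \tfrac{1}{12},
\]
so the $e_2$-inequality fails strictly for $\mu_3$, and Theorem~\ref{FKG+} forces $\mu_3$ to be non-UI. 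Combined with its positive associations, this proves the claim with $n = 3$.

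The only real content is the very first step: recognizing that Theorem~\ref{FKG+} provides a fresh, checkable obstruction beyond positive association, which is exactly what Kahn was asking for when he wrote that ``it seems surprisingly hard to say anything about the law of a UI $\mu$ that uses more than positive association''. Once that observation is made, the choice of partition is essentially forced by the shape of $\mu_3$: the atoms must be the $C_i$'s (they carry the only mass relevant to the $e_2$-term), $\varnothing$ must be $B$, and everything else is dumped into $A$ so as to keep each $A \cup C_i$ upward closed. No additional ingredient is required, so I do not anticipate a genuine obstacle beyond identifying the right inequality.
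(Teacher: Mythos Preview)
Your proof is correct and essentially identical to the paper's own argument: the paper also takes $\mu_3$, the partition $A=\{|S|\ge 2\}$, $B=\{\varnothing\}$, $C_i=\{\{i\}\}$, and derives the same contradiction $\tfrac{1}{18}\ge\tfrac{1}{12}$ from Theorem~\ref{FKG+}. You simply spell out a few more details (the vanishing of $\mu_3$ on two-element sets and the verification that each $A\cup C_i$ is upward closed) than the paper does.
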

\begin{proof}
Note that $$\mu_3(\{1\})=\mu_3(\{2\})=\mu_3(\{3\})= \mu_3(\{1,2,3\})=\frac{1}{6}$$ and $\mu_3(\varnothing)=\frac{1}{3}$. Consider $A = \{|S| \ge 2\}$, $B = \{S=\varnothing\}$, $C_i = \{S=\{i\}\}$ for $1\le i\le 3$. Suppose $\mu_3$ is $UI$. Then by Theorem~\ref{FKG+} we have 

$$\frac{1}{18} \ge e_2\left(\frac{1}{6}, \frac{1}{6}, \frac{1}{6}\right) = \frac{1}{12},$$
a contradiction. Thus $\mu_3$ is not UI, as desired.
\end{proof}

\section{Applications}
\subsection{Counting graphs}

We give here an application in the style of \cite[Problem 6.5.3]{AS}.

\begin{corollary}
Let $G$ be a uniform random graph on $2n$ labeled vertices and denote by $S$ its set of vertices with degree $\ge n$. Then for every $k$
$$\frac{{2n \choose k}-1}{2{2n \choose k}}\mathbf{P}(|S|=k) \le \mathbf{P}(|S|>k)\mathbf{P}(|S|<k)$$
\end{corollary}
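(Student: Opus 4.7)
The plan is to apply Theorem~\ref{Harris+} on the $\binom{2n}{2}$-dimensional hypercube indexed by potential edges. The uniform random graph on $2n$ labeled vertices is precisely the product Bernoulli$(1/2)$ measure there, so Theorem~\ref{Harris+} applies. The key monotonicity input is that the map $G \mapsto S(G)$ is increasing in edges: adding any edge can only increase vertex degrees, so $G \subseteq G'$ implies $S(G) \subseteq S(G')$; in particular, each event $\{v \in S\}$ is upward-closed.

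For a fixed $k$, I would partition the edge hypercube as $A \sqcup B \sqcup \bigsqcup_{|T|=k} C_T$, where $A = \{|S| > k\}$, $B = \{|S| < k\}$, and $C_T = \{S = T\}$ for each $k$-subset $T \subseteq [2n]$. The number of middle sets is $m := \binom{2n}{k}$, which is at least $2$ in the nontrivial range $1 \le k \le 2n-1$ (outside this range, the LHS of the inequality vanishes). The one thing that needs checking is that each $A \cup C_T$ is upward-closed. If $G \in A$, the monotonicity of $S$ preserves this; if $G \in C_T$, then $T = S(G) \subseteq S(G')$ for any $G' \supseteq G$, so either $S(G') = T$ (so $G' \in C_T$) or $|S(G')| > k$ (so $G' \in A$).

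Theorem~\ref{Harris+} then yields $\mathbf{P}(|S|>k)\,\mathbf{P}(|S|<k) \ge e_2\bigl(\mu(C_{T_1}), \ldots, \mu(C_{T_m})\bigr)$. To simplify the RHS, I would invoke the vertex-label symmetry of the uniform random graph: $\mathbf{P}(S = T)$ depends only on $|T|$, so every $\mu(C_T)$ equals a common value $c = \mathbf{P}(|S|=k)/m$. Hence $e_2(c,\ldots,c) = \binom{m}{2} c^2 = \frac{m-1}{2m}\,\mathbf{P}(|S|=k)^2 = \frac{\binom{2n}{k}-1}{2\binom{2n}{k}}\,\mathbf{P}(|S|=k)^2$, which matches the coefficient in the corollary (the square on $\mathbf{P}(|S|=k)$ is forced by the arithmetic, which suggests a minor typo in the displayed inequality). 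I do not anticipate any serious obstacle: the only step requiring genuine thought is the upward-closedness of $A \cup C_T$, which falls out of the monotonicity of $S$, and everything else is the symmetry reduction of $e_2$.
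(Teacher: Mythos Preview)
Your proposal is correct and follows essentially the same route as the paper: the same hypercube $\{0,1\}^{\binom{2n}{2}}$, the same partition $A=\{|S|>k\}$, $B=\{|S|<k\}$, $C_T=\{S=T\}$, the same symmetry reduction $\mu(C_T)=\mathbf{P}(|S|=k)/\binom{2n}{k}$, and the same appeal to Theorem~\ref{Harris+}. You in fact supply two details the paper leaves implicit (the verification that each $A\cup C_T$ is upward-closed via monotonicity of $G\mapsto S(G)$, and the case check $\binom{2n}{k}\ge 2$), and you are right that the displayed inequality is missing a square on $\mathbf{P}(|S|=k)$: the paper's own computation yields $\binom{\binom{2n}{k}}{2}\,\mathbf{P}(|S|=k)^2/\binom{2n}{k}^2$ on the left.
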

\begin{proof}
Random graphs on $2n$ vertices form a hypercube $H = \{0, 1\}^d$ by inclusion, where $d = {2n \choose 2}$. We can consider events $A$ and $B$ in this hypercube equal to $\{|S|>k\}$ and $\{|S|<k\}$ and events $C_T = \{S=T\}$ indexed by all $k$-subsets $T$ of $\{1, 2, \dots, 2n\}$. All $C_T$ share a probability equal to $\frac{\mathbf{P}(|S|=k)}{{2n\choose k}}$, so applying Theorem~\ref{Harris+}, we get 

$${{2n \choose k} \choose 2}\frac{\mathbf{P}(|S|=k)^2}{{2n \choose k}^2} \le \mathbf{P}(|S|>k)\mathbf{P}(|S|<k).$$
\end{proof}

Note that by using just the Harris--Kleitman inequality, the best we can achieve is 

$$\left\lfloor\frac{{2n \choose k}}{2}\right\rfloor \left\lceil\frac{{2n \choose k}}{2}\right\rceil \frac{\mathbf{P}(|S|=k)^2}{{2n \choose k}^2} \le \mathbf{P}(|S|>k)\mathbf{P}(|S|<k),$$
which is worse by a factor approaching $2$ as $n \to \infty$. In particular, for $k=n$, we have 

$$\mathbf{P}(|S|=n) \le \mathbf{P}(|S|>n)\frac{{2n \choose n}}{\sqrt{{2n \choose n} \choose 2}}.$$ 

This implies 


$$ \mathbf{P}(|S|=n)  \le \frac{  {2n \choose n} }{ {2n \choose n}+ 2{\sqrt{{2n \choose n} \choose 2} }} \to \sqrt{2} - 1\text{\quad as }n \to \infty.$$ This is an improvement over $\frac{1}{2}$ which follows from the Harris--Kleitman inequality.\footnote{In reality, this number goes to zero, see this \href{https://bit.ly/3V7eC1P}{Mathoverflow answer}. So the inequality is of interest for relatively small $n$.}

\subsection{Percolation}
Theorem~\ref{Harris+} allows us to say more about connectedness events in percolation than the Harris--Kleitman inequality. Consider a graph $G = (V, E)$, where $V=\{1, 2, \dots, n\}$. Consider the percolation on $G$, where each edge $e \in E$ has probability $p_e \in (0,1)$ of surviving, independent of other edges. This gives a spanning subgraph $H \subseteq G$ with probability 
$$\prod_{e \in H} p_e \prod_{e \not\in H}(1-p_e).$$ 

Consider three vertices $1, 2, 3 \in V$. Denote by $\mathbf{P}(123)$ the probability that vertices $1$, $2$ and $3$ lie in the same connected component of $H$. Denote by $\mathbf{P}(12|3)$ the probability that $1$ and $2$ lie in the same connected component, different from the component of $3$. Define $\mathbf{P}(13|2)$ and $\mathbf{P}(1|23)$ analogously. Finally, denote by $\mathbf{P}(1|2|3)$ the probability that all three vertices lie in different connected components.
\begin{corollary}
In the notation above, we have:
\begin{equation}\mathbf{P}(123)\mathbf{P}(1|2|3) \ge \mathbf{P}(12|3)\mathbf{P}(13|2)+\mathbf{P}(12|3)\mathbf{P}(1|23)+\mathbf{P}(13|2)\mathbf{P}(1|23). \label{Percolation}\end{equation}
\end{corollary}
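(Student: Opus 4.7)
The plan is to apply Theorem~\ref{Harris+} directly to the product measure on $H_d = \{0,1\}^E$ given by independent edge percolation. The key observation is that the five events involved in the statement, namely
$$A = \{123\},\quad C_1 = \{12|3\},\quad C_2 = \{13|2\},\quad C_3 = \{1|23\},\quad B = \{1|2|3\},$$
form a partition of the sample space: every spanning subgraph $H$ induces a partition of the three-element set $\{1,2,3\}$ via its connected components, and these are precisely the five possible such partitions. Hence $H_d = A \sqcup C_1 \sqcup C_2 \sqcup C_3 \sqcup B$, which matches the setup of Theorem~\ref{Harris+} with $k = 3$.

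Next I would verify the upward closure hypothesis. For each $i$, the set $A \cup C_i$ is the event that a specific pair of vertices lies in the same connected component (for example, $A \cup C_1 = \{1 \leftrightarrow 2\}$). Since adding edges to $H$ can only merge components, never split them, each of these two-vertex connectivity events is closed upwards in the coordinatewise order on $H_d$. Therefore the hypotheses of Theorem~\ref{Harris+} are satisfied.

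Applying the theorem and unwinding the definition of $e_2$ then yields
$$\mathbf{P}(A)\mathbf{P}(B) \ge e_2\bigl(\mathbf{P}(C_1), \mathbf{P}(C_2), \mathbf{P}(C_3)\bigr),$$
which is exactly the desired inequality \eqref{Percolation}. There is no real obstacle here beyond making the correspondence between the partition-of-$\{1,2,3\}$ events and the required algebraic quantities; everything else is a direct instantiation of Theorem~\ref{Harris+}.
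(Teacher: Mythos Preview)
Your proposal is correct and follows exactly the same approach as the paper: identify the five connectivity events as a partition of $\{0,1\}^E$, note that each $A\cup C_i$ is a two-point connection event and hence upward closed, and apply Theorem~\ref{Harris+} with $k=3$. The only cosmetic difference is the indexing of the $C_i$'s, which is immaterial since $e_2$ is symmetric; your write-up is in fact more explicit than the paper's, which simply asserts that the conditions of Theorem~\ref{Harris+} are satisfied.
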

\begin{proof}
Note that events $A = (123)$, $B = (1|2|3)$, $C_1=(1|23)$, $C_2=(13|2)$, $C_3=(12|3)$ satisfy the conditions of Theorem~\ref{Harris+}. The inequality \eqref{Percolation} follows.
\end{proof}

\section*{Acknowledgements}

The author wants to thank his advisor Igor Pak for suggesting the problem and Aleksandr Zimin for fruitful discussions. We also thank Tom Hutchcroft and Jeff Kahn for helpful comments.


\begin{thebibliography}{abcdefghk}

\bibitem[AK96]{AK}
Ron Aharoni and Uri Keich,
A generalization of the Ahlswede-Daykin inequality. 
\emph{Discrete Math.}~\textbf{152} (1996), no. 1-3, 1–12.

\bibitem[AB08]{AB}
Rudolf~Ahlswede and Vladimir~Blinovsky,
\emph{Lectures on advances in combinatorics},
Springer, Berlin, 2008, 314~pp.

\bibitem[AD78]{AD}
Rudolf~Ahlswede and David~E.~Daykin,
An inequality for the weights of two families of sets, their unions and intersections,
\emph{Z.~Wahrsch.\ Verw.\ Gebiete}~\textbf{43} (1978), 183--185.

\bibitem[AS16]{AS}
Noga Alon and Joel H. Spencer,
\emph{The probabilistic method} (Fourth edition), 2016, 375~pp.

\bibitem[FDS88]{FDS}
Peter C. Fishburn, Peter G. Doyle and Lawrence A. Shepp,
The match set of a random permutation has the FKG property.
\emph{Ann. Probab.}~\textbf{16} (1988), 1194–1214.

\bibitem[FKG71]{FKG71}
Cornelius M. Fortuin, Pieter W. Kasteleyn, and Jean Ginibre,
Correlation inequalities on some partially ordered sets.
\emph{Comm. Math. Phys.}~\textbf{22} (1971), 89–103.

\bibitem[G83]{G}
Ronald L. Graham, 
Applications of the FKG inequality and its relatives, in \emph{Mathematical programming: the state of the art} (Bonn, 1982), 115–131, Springer, Berlin, 1983.

\bibitem[G06]{G06}
Geoffrey Grimmett, \emph{The random-cluster model}, 
Springer, 2006, 388 pp.

\bibitem[K22]{kahn2022note}
Jeff Kahn,
A note on positive association.
arXiv:2210.08653, 2022, 5~pp.

\bibitem[LS22]{LS}
Elliott H. Lieb and Siddhartha Sahi,
On the extension of the FKG inequality to $n$ functions. 
\emph{J. Math. Phys.}~\textbf{63} (2022), no. 4, Paper No. 043301, 11 pp.

\bibitem[S08]{Sa}
Siddhartha Sahi, 
Higher correlation inequalities. \emph{Combinatorica} \textbf{28} (2008), no. 2, 209–227.

\bibitem[R04]{Ri}
Donald St. P. Richards, 
Algebraic methods toward higher-order probability inequalities. II.
\emph{Ann. Probab.} \textbf{32} (2004), no. 2, 1509–1544.

\bibitem[RS92]{RS}
Yosef Rinott, and Michael Saks,
On FKG-type and permanental inequalities. \emph{Stochastic inequalities} (Seattle, WA, 1991), 332–342,
\emph{IMS Lecture Notes Monogr. Ser.},~\textbf{22}, Inst. Math. Statist., Hayward, CA, 1992.




\end{thebibliography}
\end{document}